\newcommand{\shrinkmargins}[1]{
  \addtolength{\textheight}{#1\topmargin}
  \addtolength{\textheight}{#1\topmargin}
  \addtolength{\textwidth}{#1\oddsidemargin}
  \addtolength{\textwidth}{#1\evensidemargin}
  \addtolength{\topmargin}{-#1\topmargin}
  \addtolength{\oddsidemargin}{-#1\oddsidemargin}
  \addtolength{\evensidemargin}{-#1\evensidemargin}
  }
\newtheorem{theorem}{Theorem}
\newtheorem{lemma}[theorem]{Lemma}
\newtheorem{corollary}[theorem]{Corollary}
\newtheorem*{theorem*}{Theorem}
\theoremstyle{definition}
\newtheorem*{definition}{Definition}
\theoremstyle{remark}
\newtheorem*{remarks}{{\bf Remarks}}
\newtheorem*{examples}{Examples}
\numberwithin{theorem}{section} \numberwithin{equation}{section}
\def\func#1{\mathop{\rm #1}}%
\begin{document}
\title[Tur\'an Inequalities]{Tur\'an inequalities from Chebyshev to Laguerre polynomials}
\author{Bernhard Heim }
\address{Lehrstuhl A f\"{u}r Mathematik, RWTH Aachen University, 52056 Aachen, Germany}
\email{bernhard.heim@rwth-aachen.de}
\author{Markus Neuhauser}
\address{Kutaisi International University, 5/7, Youth Avenue,  Kutaisi, 4600 Georgia}
\address{Lehrstuhl A f\"{u}r Mathematik, RWTH Aachen University, 52056 Aachen, Germany}
\email{markus.neuhauser@kiu.edu.ge}
\author{Robert Tr\"oger}
\email{robert@silva-troeger.de}
\subjclass[2010] {Primary 05A17, 11P82; Secondary 05A20}
\keywords{Orthogonal polynomials, Tur\'an inequalities, Recurrence relation.}
%%\linenumbers
\begin{abstract}
Let $g$ and $h$ be
real-valued arithmetic functions, positive and normalized. 
Specific choices within the following general scheme of recursively defined polynomials
\begin{equation*}
P_n^{g,h}(x):= \frac{x}{h(n)} \sum_{k=1}^{n} g(k) \, P_{n-k}^{g,h}(x),
\end{equation*}
with initial value $P_{0}^{g,h}(x)=1$
encode information about several classical, widely
studied polynomials. This includes
Chebyshev polynomials of the second kind, associated Laguerre polynomials,
and the Nekrasov--Okounkov polynomials.
In this paper we prove that for $g=\func{id}$ and fixed $h$ we obtain
orthogonal polynomial sequences for positive definite functionals.
Let $h(n)=n^s$ with $0 \leq s \leq 1 $.
Then the sequence satisfies
Tur\'an inequalities for $x \geq 0$. 
\end{abstract}
%%%%%%%%%%%%%%%%%%%%%%%%%%%%%%%%%%%%%%%%%%%%%%%%%%%%%%%%%%%%%%%%%%%%%%%%%%%%%%%%%%%%%%%%%%%%%%%%%%%%%%%%%%
%%%%%%%%%%%%%%%%%%%%%%%%%%%%%%%%%%%%%%%%%%%%%%%%%%%%%%%%%%%%%%%%%%%%%%%%%%%%%%%%%%%%%%%%%%%%%%%%%%%%%%%%%%
%%%%%%%%%%%%%%%%%%%%%%%%%%%%%%%%%%%%%%%%%%%%%%%%%%%%%%%%%%%%%%%%%%%%%%%%%%%%%%%%%%%%%%%%%%%%%%%%%%%%%%%%%%
%%%%%%%%%%%%%%%%%%%%%%%%%%%%%%%%%%%%%%%%%%%%%%%%%%%%%%%%%%%%%%%%%%%%%%%%%%%%%%%%%%%%%%%%%%%%%%%%%%%%%%%%%%
\maketitle
\newpage
\section{Introduction and main results}
Orthogonal polynomials play an important role in mathematics and applied sciences.
The theory of orthogonal polynomials and their main properties 
is
well documented \cite{Fr71,Sze75,Ne79, VA08, Ch11}. In this
paper we define for every positive normalized arithmetic function $h(n)$ an orthogonal polynomial sequence with positive definite
moment functionals. We propose certain properties on $h(n)$ which imply Tur\'an inequalities.
We show that $h(n)=h_s(n):= n^s$ for $s \in [0
,1]$ provides complete results.

In 1948,
in a seminal work, Szeg\H{o} \cite{Sze48} called attention to a discovery
by Tur\'an \cite{Tu50}.
Studying the zeros of Legendre polynomials $P_n(x)$ of degree $n$, Tur\'an
proved and utilized a new type of inequalities for orthogonal polynomials:
\begin{equation*}
T_n(x):=  P_n(x)^2 - P_{n-1}(x) \, P_{n+1}(x), \qquad x \in [-1,1].
\end{equation*}
Szeg\H{o} presented several proofs and showed that Tur\'an inequalities are also satisfied
by other orthogonal polynomials, including ultraspherical, associated Laguerre,
and Hermite polynomials.
Proofs include transcendental functions and Jensen polynomials. 

This initiated research in several
directions \cite{KS60}, including higher Tur\'an inequalities
capturing the conjectured hyperbolic property of Jensen polynomials
associated with the Riemann zeta
function \cite{Cs11,CJW19, GORZ19}. We also refer to the open session report
by van Assche \cite{VA08},
where Nevai emphasized also the importance of
studying Tur\'an inequalities. The way we normalize the polynomials is essential for the Tur\'an
inequality to hold, we refer to the recent results of Szwarc \cite{Szw98, Szw21}. In this paper we
follow a uniform approach.

We define a sequence of polynomials $\{P_{n}^{g,h}(x)\}_n$
associated with
positive valued normalized arithmetic functions $g$ and $h$. Let $P_{0}^{g,h}(x):=1$ and
\begin{equation*}
P_n^{g,h}(x):= \frac{x}{h(n)} \sum_{k=1}^{n} g(k) \, P_{n-k}^{g,h}(x).
\end{equation*}
The Nekrasov--Okounkov polynomials \cite{NO06} are provided for
$g(n)= \sigma(n):= \sum_{d \mid n} d$ and $h(n)= \func{id}(n)=n$.
Much simpler, but still interesting is the case $g(n)=
{n}$, which involves associated
Laguerre polynomials $L_n^{(1)}(x)$ and Chebyshev polynomials $U_n(x)$ of the second kind \cite{Ch11}.
We state our first result.
\begin{theorem} \label{th:1}
Let $g= \func{id}$ and $h$
be arbitrary positive real valued normalized arithmetic functions.
The sequence $\{q_n^h(x)\}_{n=1}^{\infty}$ defined by
\begin{equation*}
q_n^h(x) := \frac{\prod_{k=1}^{n+1} h(k)}{x} \, P_{n+1}^{\func{id},h}(x)
\end{equation*}
is an orthogonal polynomial sequence (OPS) with a positive definite moment functional $\Lambda $.
\end{theorem}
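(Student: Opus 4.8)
The plan is to characterize the sequence $\{q_n^h\}$ through Favard's theorem: a sequence of monic polynomials with $\deg q_n^h = n$ is an OPS for a positive definite moment functional precisely when it satisfies a three-term recurrence $q_{n+1}^h(x) = (x - c_n)\,q_n^h(x) - \lambda_n\,q_{n-1}^h(x)$ with real $c_n$ and strictly positive $\lambda_n$ (see \cite{Ch11}). Thus the whole proof reduces to producing such a recurrence with $\lambda_n > 0$, the positivity coming directly from the hypothesis that $h$ is positive valued.

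First I would record the normalization facts. Writing $H_n := \prod_{k=1}^{n} h(k)$, an induction on the defining recursion shows that $P_n^{\func{id},h}$ has degree $n$ with leading coefficient $1/H_n$, since only the $k=1$ summand feeds the top degree. As $P_n^{\func{id},h}$ is divisible by $x$ for $n \geq 1$, the polynomial $q_n^h = (H_{n+1}/x)\,P_{n+1}^{\func{id},h}$ has degree $n$ and leading coefficient $1$; in particular $q_0^h = 1$, so $\{q_n^h\}_{n \geq 0}$ is a monic sequence of the correct degrees.

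Second, and this is the heart of the matter, I would convert the convolution-type recursion into a genuine three-term recurrence. Set $S_n := \sum_{k=1}^{n} k\, P_{n-k}^{\func{id},h}$, so that the definition reads $h(n)\,P_n^{\func{id},h} = x\, S_n$. The point is that $g = \func{id}$ is exactly the weight whose generating function $\sum_{k \geq 1} k\, t^k = t/(1-t)^2$ satisfies $(1-t)^2 \cdot t/(1-t)^2 = t$; reading off coefficients, this is the identity
$$S_n - 2 S_{n-1} + S_{n-2} = P_{n-1}^{\func{id},h},$$
i.e. the second difference of $S_n$ collapses to a single shifted term (equivalently, one checks this telescoping directly from $S_n - S_{n-1} = \sum_{j=0}^{n-1} P_j^{\func{id},h}$). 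Substituting $S_m = h(m)\,P_m^{\func{id},h}/x$ and clearing $x$ yields, for $n \geq 2$,
$$h(n)\, P_n^{\func{id},h} = \bigl(x + 2 h(n-1)\bigr) P_{n-1}^{\func{id},h} - h(n-2)\, P_{n-2}^{\func{id},h},$$
with the boundary cases $n = 0, 1$ verified by hand.

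Finally I would push this recurrence through the substitution $P_{m}^{\func{id},h} = (x/H_{m})\,q_{m-1}^h$. Using $H_m = h(m)\,H_{m-1}$ to cancel the products, all factors of $x$ and $H$ telescope and leave the clean monic recurrence
$$q_{n+1}^h(x) = \bigl(x + 2 h(n+1)\bigr) q_n^h(x) - h(n)\, h(n+1)\, q_{n-1}^h(x), \qquad n \geq 1.$$
Here $c_n = -2 h(n+1)$ is real and $\lambda_n = h(n)\,h(n+1) > 0$ because $h$ takes positive values, so Favard's theorem delivers a positive definite moment functional $\Lambda$ with respect to which $\{q_n^h\}$ is orthogonal. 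The only genuine obstacle is the middle step: the convolution carries no three-term structure for a general weight $g$, and it is precisely the special algebraic identity $(1-t)^2 \cdot \sum_{k} k\, t^k = t$, that is, the choice $g = \func{id}$, that produces one. Once the recurrence is in hand, positivity of $h$ makes the Favard hypotheses automatic.
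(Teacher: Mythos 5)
Your proposal is correct and follows essentially the same route as the paper: establish the monic three-term recurrence $q_{n+1}^h=(x+2h(n+1))\,q_n^h-h(n)h(n+1)\,q_{n-1}^h$ and invoke Favard's theorem, with positivity of $\lambda_n=h(n)h(n+1)$ coming from the positivity of $h$. Your second-difference identity $S_n-2S_{n-1}+S_{n-2}=P_{n-1}^{\func{id},h}$ is exactly the telescoping computation the paper gives as its direct proof of the recurrence (there phrased via the vanishing second difference $\alpha_2 g(n+1)+\alpha_1 g(n)+\alpha_0 g(n-1)=0$ for $g=\func{id}$ and Theorem~2.1 of \cite{HNT20}).
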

In Section \ref{abschnitt2}
we recall basic properties for the OPS and prove Theorem \ref{th:1}.
This result leads to the following application.
\begin{corollary}\label{cor:1}
The polynomials $P_n^{\func{id},h}(x)$ have simple, real, and non-positive
zeros. 
Moreover, the
zeros of $\frac{1}{x} \, P_n^{\func{id},h}(x)$
are interlacing.
\end{corollary}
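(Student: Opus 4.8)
The plan is to deduce everything from Theorem \ref{th:1} together with the classical zero theory for orthogonal polynomial sequences. First I would record the algebraic link between the two families: since
\[
q_{n-1}^h(x) = \frac{\prod_{k=1}^{n} h(k)}{x}\, P_{n}^{\func{id},h}(x),
\]
and $\prod_{k=1}^n h(k) > 0$, the polynomial $\frac{1}{x}\,P_n^{\func{id},h}(x)$ is a nonzero scalar multiple of $q_{n-1}^h(x)$ and therefore has exactly the same zeros.

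By Theorem \ref{th:1} the sequence $\{q_n^h\}$ is an OPS with a positive definite moment functional $\Lambda$. The standard theory (see \cite{Ch11, Sze75}) then gives that each $q_n^h$ has $n$ real and simple zeros and that the zeros of two consecutive members $q_{n-1}^h$ and $q_n^h$ strictly interlace. Transporting this through the scalar identity above yields at once that the zeros of $\frac{1}{x}\,P_n^{\func{id},h}(x)$ are real and simple, and that those of $\frac{1}{x}\,P_n^{\func{id},h}$ and $\frac{1}{x}\,P_{n+1}^{\func{id},h}$ interlace, which is the interlacing assertion of the corollary.

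Next I would reassemble the zeros of $P_n^{\func{id},h}$ itself. The recursion shows $P_n^{\func{id},h}(x) = x\cdot R(x)$, where $R$ is proportional to $q_{n-1}^h$, so the zero set of $P_n^{\func{id},h}$ is $\{0\}$ together with the zeros of $q_{n-1}^h$. To see that $0$ is a simple zero disjoint from the others, I would extract the lowest-order coefficient from the recursion: the term $g(n)\,P_0^{\func{id},h} = n$ contributes $\frac{n}{h(n)}\,x$, so the coefficient of $x$ in $P_n^{\func{id},h}$ equals $\frac{n}{h(n)} > 0$, i.e.\ $q_{n-1}^h(0) \neq 0$. Hence all zeros of $P_n^{\func{id},h}$ are real and simple.

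Finally, for non-positivity I would argue by induction on the recursion that every coefficient of $P_n^{\func{id},h}$ is non-negative: this holds for $P_0^{\func{id},h}$ and $P_1^{\func{id},h}$, and is preserved because $g(k)=k>0$ and $h(n)>0$ make the recursion a non-negative combination of the previous polynomials. A polynomial with non-negative coefficients and positive leading coefficient is strictly positive for $x>0$, so $P_n^{\func{id},h}$ has no positive zero; combined with reality this forces all zeros to be non-positive. The only genuinely new input beyond the OPS zero theorems is this coefficient-positivity step, which I expect to be the main (though modest) obstacle: reality, simplicity, and interlacing are immediate consequences of Theorem \ref{th:1}, whereas the \emph{sign} of the zeros is not detected by positive definiteness alone and must be supplied by the structure of the recursion.
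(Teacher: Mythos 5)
Your proposal is correct and follows essentially the same route as the paper: the paper's proof is exactly the citation of Chihara's Theorems 5.2 and 5.3 applied to the OPS $\{q_n^h\}$ from Theorem \ref{th:1}, together with the observation that the coefficients of $P_n^{\func{id},h}(x)$ are non-negative. You have merely written out in full the details (the scalar identity linking $\tfrac{1}{x}P_n^{\func{id},h}$ to $q_{n-1}^h$, the computation $q_{n-1}^h(0)\propto n/h(n)>0$, and the coefficient-positivity induction) that the paper leaves implicit.
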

This follows from (\cite{Ch11}, Theorem 5.2 and Theorem 5.3). Note that the coefficients of
$P_n^{\func{id}, h}(x)$ are non-negative.

Suppose $h$ is log-concave and satisfies some further properties, then
the polynomials $P_n^{\func{id},h}(x)$ satisfy Tur\'an inequalities.
\begin{theorem}\label{suppose}
Let $h$ be a real-valued normalized arithmetic function with positive values, extended by $h(0):=0$.
Suppose that
\begin{equation*}\label{h:lc}
\Delta_h(n):= h(n)^2 - h(n-1) \, h(n+1) \geq 0 \qquad \text{ for all } n \geq 1.
\end{equation*}
Let $n \geq 1$ and $x \geq 0$. Let 
\begin{equation*}\label{v:root}
v_{n,2}^h(x):= \frac{1}{h(n+1)} \,\, \left( x + 2 \, h(n) + \sqrt{x^2 + 4 \, h(n) \,x + 4 \Delta_h(n)}  \right) .
\end{equation*}
Suppose that for all $n \geq 1$ and $x \geq 0$:
\begin{equation}\label{decreasing}
v_{n,2}^h(x) \geq v_{n+1,2}^h(x).
\end{equation}
Then the polynomials $P_n^{\func{id},h}(x)$ satisfy Tur\'an inequalities for all
$n \geq 2$ and $x \geq 0$:
\begin{equation}\label{Turan}
T_n^h(x):= P_n^{\func{id},h}(x)^2 -  P_{n-1}^{\func{id},h}(x) \, 
P_{n+1}^{\func{id},h}(x) \geq 0.
\end{equation}
\end{theorem}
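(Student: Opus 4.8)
The plan is to extract from the convolution recursion a genuine three-term recurrence, rewrite the Tur\'an expression as a quadratic in the ratio of consecutive polynomials, and then run a one-dimensional monotone dynamical-systems induction whose chaining device is exactly hypothesis \eqref{decreasing}. Throughout write $P_n := P_n^{\func{id},h}(x)$ and, for $n\ge 1$, $\sigma_n := \sum_{k=1}^n k\,P_{n-k} = h(n)P_n/x$. First I would reindex the defining sums and compute the second difference, obtaining $\sigma_{n+1}-2\sigma_n+\sigma_{n-1} = \sum_{j=0}^n P_j - \sum_{j=0}^{n-1}P_j = P_n$. Substituting $\sigma_m = h(m)P_m/x$ and clearing $x$ gives the three-term recurrence
\[
h(n+1)\,P_{n+1} = \bigl(x+2\,h(n)\bigr)\,P_n - h(n-1)\,P_{n-1}, \qquad n\ge 1,
\]
valid with the convention $h(0)=0$; this identity is the engine of the whole argument.

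Next I would turn \eqref{Turan} into a sign condition on a quadratic. Multiplying $T_n^h(x)$ by $h(n+1)$ and eliminating $P_{n+1}$ via the recurrence yields $h(n+1)\,T_n^h(x) = P_{n-1}^2\,Q_n(r_n)$, where $r_n := P_n/P_{n-1}$ and $Q_n(t) := h(n+1)\,t^2 - (x+2\,h(n))\,t + h(n-1)$. For $x>0$ every $P_n$ is positive (non-negative coefficients with positive leading term, by the remark after Corollary~\ref{cor:1}), so $r_n>0$ is well defined; the case $x=0$ is trivial since $P_n(0)=0$ for $n\ge 1$. The discriminant of $Q_n$ is precisely $x^2+4\,h(n)\,x+4\,\Delta_h(n)$, which is $\ge 0$ by the log-concavity hypothesis, and its larger root is exactly $\tfrac12\,v_{n,2}^h(x)$. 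Since $Q_n$ opens upward, the inequality $T_n^h(x)\ge 0$ will follow as soon as I establish the lower bound $r_n \ge \tfrac12\,v_{n,2}^h(x)$.

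To prove that bound I would read the recurrence as a scalar iteration: dividing by $P_n$ gives $r_{n+1}=\phi_n(r_n)$ with $\phi_n(t)=\frac{1}{h(n+1)}\bigl(x+2\,h(n)-h(n-1)/t\bigr)$, which for $n\ge 2$ is strictly increasing on $(0,\infty)$ and whose fixed points are exactly the two roots of $Q_n$; in particular $\tfrac12\,v_{n,2}^h(x)$ is a fixed point of $\phi_n$. I then prove $r_n\ge\tfrac12\,v_{n,2}^h(x)$ for all $n\ge 2$ by induction. For the base case, since $h(1)=1$ and $\Delta_h(1)=1$ (because $h(0)=0$), the discriminant at $n=1$ collapses to the perfect square $(x+2)^2$, so $\tfrac12\,v_{1,2}^h(x)=(x+2)/h(2)=P_2/P_1=r_2$, and \eqref{decreasing} at $n=1$ gives $r_2=\tfrac12\,v_{1,2}^h(x)\ge\tfrac12\,v_{2,2}^h(x)$. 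For the inductive step, if $r_n\ge\tfrac12\,v_{n,2}^h(x)$ then monotonicity of $\phi_n$ and the fixed-point property give $r_{n+1}=\phi_n(r_n)\ge\phi_n\bigl(\tfrac12\,v_{n,2}^h(x)\bigr)=\tfrac12\,v_{n,2}^h(x)\ge\tfrac12\,v_{n+1,2}^h(x)$, the last inequality again by \eqref{decreasing}. This closes the induction, and with the quadratic observation above it yields $T_n^h(x)\ge 0$ for all $n\ge 2$ and $x\ge 0$.

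The conceptual core, and the step I expect to be the main obstacle, is the inductive transport of the bound: each map $\phi_n$ has its \emph{own} fixed point $\tfrac12\,v_{n,2}^h(x)$, which drifts with $n$, and hypothesis \eqref{decreasing} is precisely what is needed to pass from the fixed point at level $n$ down to the fixed point at level $n+1$ so that the induction can proceed. The other delicate point to verify carefully is the perfect-square collapse at $n=1$, which makes the base case the equality $r_2=\tfrac12\,v_{1,2}^h(x)$; everything else (positivity of the $P_n$ for $x>0$, non-negativity of the discriminant, and monotonicity of $\phi_n$) is routine once these two facts are in place.
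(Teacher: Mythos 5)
Your proof is correct and follows essentially the same route as the paper's: both reduce the Tur\'an inequality to the sign of the quadratic $Q_n$ evaluated at the ratio $P_n^{\func{id},h}(x)/P_{n-1}^{\func{id},h}(x)$ and establish the required lower bound on that ratio by induction, with the base case collapsing to the perfect square at $n=1$ and hypothesis \eqref{decreasing} transporting the bound from level $n$ to level $n+1$ exactly as in the paper's displayed inequality chain. Your fixed-point/monotone-iteration phrasing and the careful use of $\tfrac12 v_{n,2}^h(x)$ (which correctly reconciles the factor-of-two discrepancy between the theorem's displayed formula for $v_{n,2}^h$ and the root formula used in the paper's proof) are only cosmetic differences.
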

Let $s \in [0,
1]$. We prove that the arithmetic function $h_s(n):= n^s$ 
satisfies the decreasing property
(\ref{decreasing}). It also leads to  a transformation of the
distribution of zeros of the OPS for $s=0$ to the
distribution of zeros for
$s=1$. This gives an interesting {\it continuous} transformation 
between Chebyshev polynomials of the second kind and associated 
Laguerre polynomials, keeping the OPS property and satisfying Tur\'an inequalities.
This follows from the fact that $P_n^{\func{id},h_0}(x)= x U_{n-1}(\frac{x}{2}+1)$ and 
$P_n^{\func{id},h_1}(x) = \frac{x}{n} L_{n-1}^{(1)}(x)$. We refer to \cite{HNT20} and \cite{HLN19}.
To summarize, we have the following:
\begin{theorem} \label{result}
Let $s\in [0,1]$ and $x \geq 0$. Then,
for all $n \geq 2
$, we have the Tur\'an inequalities
\begin{equation*}
T_n^{h_s}(x):= P_n^{\func{id},h_s}(x)^2 - P_{n-1}^{\func{id},h_s}(x) \,\, P_{n
+1}^{\func{id},h_s}(x) \, \geq \, 0.
\end{equation*}
\end{theorem}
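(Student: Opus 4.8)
The plan is to derive Theorem~\ref{result} from Theorem~\ref{suppose} by verifying its two hypotheses for the specific function $h=h_s$: log-concavity, and the decreasing property \eqref{decreasing}. Note first that $h_s$ is compatible with the extension $h(0):=0$, since $0^s=0$ for $s>0$ and for $s=0$ the convention sets $h_0(0):=0$ while $h_0(n)=1$ for $n\geq1$. Log-concavity is immediate: because $(n-1)(n+1)=n^2-1\leq n^2$ and $t\mapsto t^s$ is nondecreasing for $s\geq0$,
\[
\Delta_{h_s}(n)=n^{2s}-\bigl((n-1)(n+1)\bigr)^s=n^{2s}-(n^2-1)^s\geq0,
\]
with $\Delta_{h_s}(1)=1-0=1$. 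This requires no further work, so the entire substance lies in \eqref{decreasing}.

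To attack \eqref{decreasing} I would first recast $v_{n,2}^{h_s}(x)$ as the larger root of the upward parabola
\[
Q_n(v):=(n+1)^s v^2-2\,(x+2n^s)\,v+4\,(n-1)^s,
\]
since the radicand in its definition is exactly $(x+2n^s)^2-4(n-1)^s(n+1)^s$. For two such parabolas the larger root of $Q_n$ dominates that of $Q_{n+1}$ precisely when $Q_{n+1}\bigl(v_{n,2}^{h_s}(x)\bigr)\geq0$ \emph{and} $v_{n,2}^{h_s}(x)$ lies at or to the right of the axis of $Q_{n+1}$, i.e.\ $v_{n,2}^{h_s}(x)\geq(x+2(n+1)^s)/(n+2)^s$. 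Exploiting the defining relation $Q_n(v_{n,2}^{h_s}(x))=0$ to eliminate the square, the quantity $Q_{n+1}(v_{n,2}^{h_s}(x))$ collapses to a linear expression $A\,v_{n,2}^{h_s}(x)+B$, in which
\[
B=\tfrac{4}{(n+1)^s}\bigl((n^2+n)^s-(n^2+n-2)^s\bigr)\geq0
\]
again by monotonicity of $t\mapsto t^s$, whereas the sign of $A$ flips at the threshold $x_0:=2\delta(n)/\bigl((n+2)^s-(n+1)^s\bigr)$ with $\delta(n):=(n+1)^{2s}-(n^2+2n)^s\geq0$.

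This threshold organizes the proof. For $x\geq x_0$ one has $A\geq0$, hence $A\,v_{n,2}^{h_s}(x)+B\geq0$ since $v_{n,2}^{h_s}(x)\geq0$; and the axis condition follows from the elementary bound $v_{n,2}^{h_s}(x)\geq(x+2n^s)/(n+1)^s\geq(x+2(n+1)^s)/(n+2)^s$, the second inequality being equivalent to $x\geq x_0$ after cross-multiplication. I expect the main obstacle to be the complementary regime $0\leq x<x_0$, where $A<0$ and these elementary bounds point the wrong way. There I would instead isolate the two square roots and square (twice, if necessary), reducing \eqref{decreasing} to a polynomial inequality among powers of $n-1,n,n+1,n+2$ and $x$; the task is to show that the manifestly nonnegative contribution $(n+1)^{2s}(n^2+2n)^s-(n+2)^{2s}(n^2-1)^s=(n+1)^s(n+2)^s\bigl((n^2+n)^s-(n^2+n-2)^s\bigr)\geq0$ dominates the indefinite cross-terms uniformly in $s\in[0,1]$. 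The endpoints serve as consistency checks: at $s=1$ one finds $v_{n,2}^{h_1}(0)=2$ for every $n$ (equality throughout), and at $s=0$ the property holds with equality for $n\geq2$ and strictly for $n=1$; these agree with $P_n^{\func{id},h_0}(x)=xU_{n-1}(\tfrac{x}{2}+1)$ and $P_n^{\func{id},h_1}(x)=\tfrac{x}{n}L_{n-1}^{(1)}(x)$, confirming the claimed continuous transformation. Once \eqref{decreasing} is established for $h_s$, Theorem~\ref{suppose} yields $T_n^{h_s}(x)\geq0$ for all $n\geq2$ and $x\geq0$, which is exactly Theorem~\ref{result}.
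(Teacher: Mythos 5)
Your overall strategy coincides with the paper's: Theorem \ref{result} is obtained from Theorem \ref{suppose} once log-concavity of $h_s$ (trivial) and the decreasing property \eqref{decreasing} are verified, so the entire weight of the proof rests on \eqref{decreasing}. Your reduction of the root comparison to the sign of $Q_{n+1}\bigl(v_{n,2}^{h_s}(x)\bigr)$ plus an axis condition is sound, and the regime $x\geq x_0$ is handled correctly. But the complementary regime $0\leq x<x_0$ is not proved -- it is only announced (``I would instead isolate the two square roots and square \dots the task is to show that \dots dominates the indefinite cross-terms uniformly in $s$''). That regime contains $x=0$, and the inequality $v_{n+1,2}^{h_s}(0)\leq v_{n,2}^{h_s}(0)$ is exactly the hard core of the whole theorem: after squaring one is left with an inequality between sums of incommensurate powers $(n^2-1)^s$, $n^{2s}$, $(n^2+2n)^s$, $(n+1)^{2s}$ which does not follow from term-by-term monotonicity. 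You even observed the symptom of the difficulty -- both sides agree at the endpoints $s=0$ and $s=1$ -- but an inequality that degenerates to equality at both endpoints of the parameter range cannot be won by crude domination of cross-terms; it needs an interpolation argument in $s$.

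For comparison, the paper closes this gap in two steps. First, a separate criterion (the lemma at the end of Section 3) reduces \eqref{decreasing} for all $x\geq 0$ to the single value $x=0$, under the additional hypotheses $h(n+1)\geq h(n)\geq 1$, $\Delta_h(n)\geq\Delta_h(n+1)\geq 0$, and $h(n+1)^3-h(n)h(n+2)^2\leq 0$ (each of which must itself be checked for $h_s(n)=n^s$; your write-up does not address these either). Second, the Main Lemma treats $x=0$ by substituting $S=\bigl(1-(n+1)^{-2}\bigr)^{-s}$, so that the claim becomes inequality \eqref{eq:d2} between two functions of $S$ on the interval $1\leq S\leq\bigl(1-(n+1)^{-2}\bigr)^{-1}$; the two sides agree at both endpoints, the left-hand side is shown to be convex in $S$ and the right-hand side concave, which forces the inequality in the interior. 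Some argument of this convexity/interpolation type (or a genuinely completed algebraic estimate) is what your proposal is missing; as written, the case $0\leq x<x_0$ -- and with it the theorem -- remains unproved.
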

We have displayed the
transformation of the zeros for $n=7$
in Figure \ref{realroots}.
Note for $s=0$ we have plotted the
zeros of $P_7^{\func{id},h_0}(x)$, then approximated
all the
zeros for each $P_7^{\func{id},h_s}(x)$, $ 0 < s < 1$, and finally
attain the
zeros of $P_7^{\func{id},h_1}(x)$.

\begin{minipage}{0.45\textwidth}
%%\includegraphics[width=\textwidth]{./concave_delta_ap1_am1_100_small.png}
% active \includegraphics[width=\textwidth]{./concave_delta_ap1_am1_100_size_4_4.png}
\includegraphics[width=\textwidth]{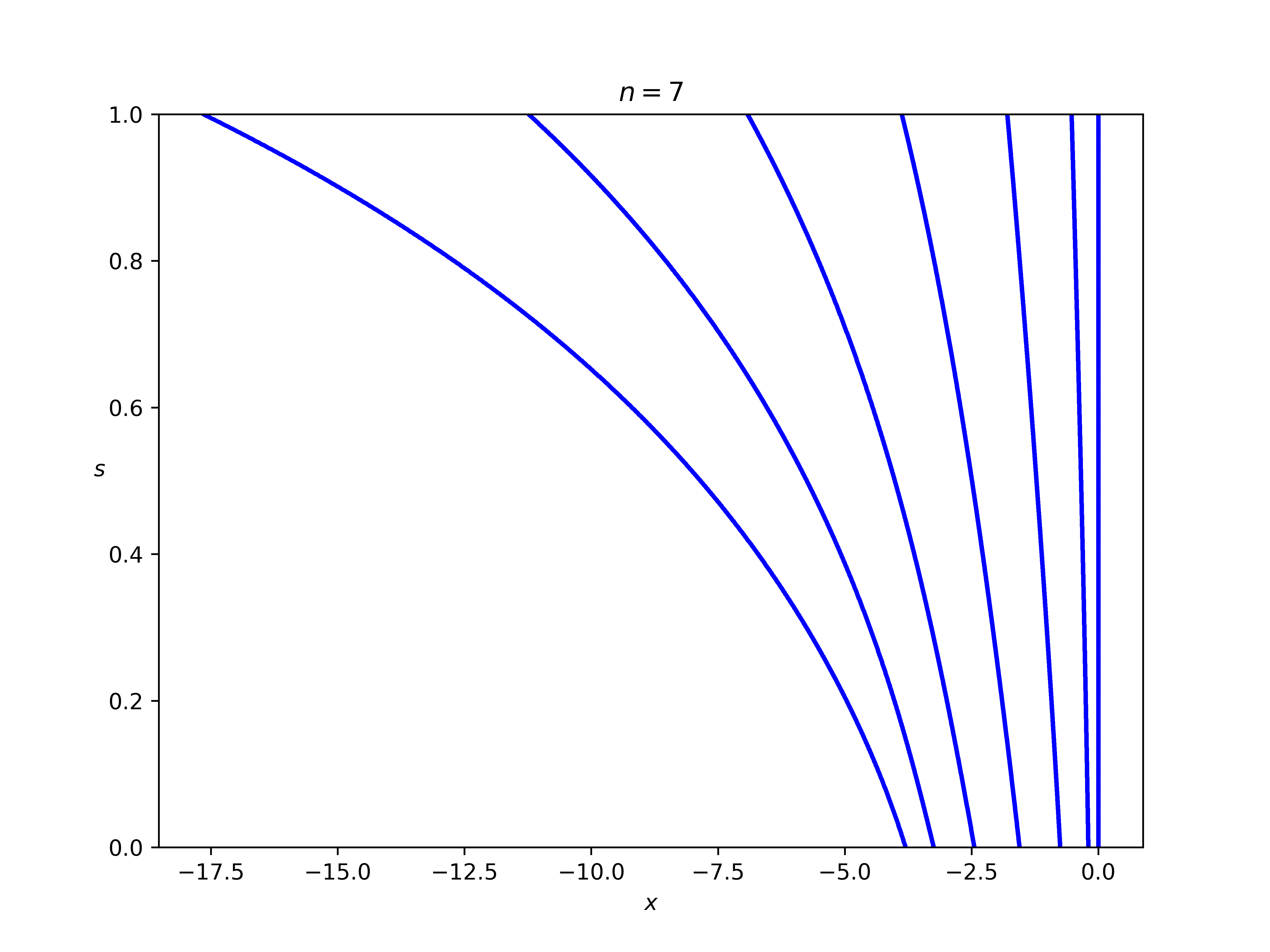}
\end{minipage}
\hfill
\begin{minipage}{0.52\textwidth}
\vspace{1.5cm}
\captionsetup{width=\linewidth}
\captionof{figure}{\label{realroots}
Transformation: \\
zeros
of $P_{7}
^{\func{id},h_s}(x)$ for \\ $ 0 \leq s \leq 1$.
%Zeros of $\Delta_{a+1, a-1}(x)$ with real part larger zero for $ 1 \leq a \leq 100$.
%Blue labels the real zeros and red the imaginary zeros.
}
\end{minipage}

%%%%%%%%%%%%%%%%%%%%%%%%%%%%%%%%%%%%%%%%%%%%%%%%%%%%%%%%%%%%%%%%%%%%%%%%%%%%%%%%%%%%%%%%%%%%%%%%%%%%%%
%%%%%%%%%%%%%%%%%%%%%%%%%%%%%%%%%%%%%%%%%%%%%%%%%%%%%%%%%%%%%%%%%%%%%%%%%%%%%%%%%%%%%%%%%%%%%%%%%%%%%%
%%%%%%%%%%%%%%%%%%%%%%%%%%%%%%%%%%%%%%%%%%%%%%%%%               OPS
\section{\label{abschnitt2}OPS and proof of Theorem \ref{th:1}}
We first recall basic definitions and results related to Favard's theorem.
Then we provide a three
term
recurrence relation for $\left\{ P_n^{\func{id},h}\left( x\right) \right\} $ and finally prove Theorem \ref{th:1}.
\subsection{Orthogonal polynomial sequences and Favard's theorem}
Every linear functional $\Lambda $ on the vector space of polynomials $\mathbb{C}[x]$ is given and
uniquely determined by the complex values $\Lambda (x^n)= \mu_n$ on the standard basis.
The sequence $\{\mu_n\}_{n=0}^{\infty}$ is
called moment sequence and $\Lambda $ moment functional.
We closely follow Chihara (\cite{Ch11}, Sections 1.2, 1.3, and 1.4). 
\begin{definition}
A sequence $\{P_n(x)\}_{n=0}^{\infty}$ is called orthogonal polynomial sequence (OPS) with respect to 
a moment functional $\Lambda $ if for all $n,m \in \mathbb{N}_0$:
\begin{itemize}
\item[a)] $P_n(x)$ is a polynomial of degree $n$,
\item[b)] $\Lambda \left( P_m(x) \, P_n(x) \right) = k_n \, \delta_{m,n}, \quad
k_n \neq 0.
$
\end{itemize}
If this is the case we say, $\{P_n(x)\}$ is an OPS for $\Lambda $.
\end{definition}
Let $\left\{ \mu_n\right\} $ be a moment
sequence
and let $\Delta_n$ be the determinant of the Hankel matrix
$(\mu_{i+j})_{i,j=0}^n$. Let $\Lambda $ be a moment functional with moment sequence $\left\{
\mu _{n}
\right\} $.
Then we have (\cite{Ch11}, Theorem 3.1) for moment functionals $\Lambda $:
\begin{equation*}
\text{OPS for }\Lambda  \text{ exists } \Longrightarrow \Delta_n \neq 0, \qquad
n \geq 0.
\end{equation*}

Let $E \subset (-\infty,\infty)$. Let $P(E)$ be the set of all polynomials with real coefficients,
restricted to $E$ non-negative and not the zero polynomial. 
A moment functional is said to be positive definite on $E$
if $\Lambda $ restricted to $P(E)$ is non-negative. The set $E$ is called a supporting set for $\Lambda $.
If $E= (-\infty, \infty)$ we say $\Lambda $ is positive definite, $\Lambda  >0$.

Let $\Lambda $ be positive definite, then one can conclude that the functional moments are real and that
a corresponding OPS of real polynomials exists (\cite{Ch11}, Theorem 3.3). With  a little bit more effort one
obtains the important characterization of positive definite moment functionals (\cite{Ch11}, Theorem 3.4):       %.
\begin{equation*}
\Lambda  > 0 \Leftrightarrow \mu_n \in \mathbb{R} \text{ and } \Delta_n > 0,\quad
n \geq 0.
\end{equation*}
All $\Lambda $ with $\Delta_n \neq 0$, $
n \geq 0
$, are
called quasi-definite. Now we are able to
state an important result relating OPS to three
term recurrence relations.

\begin{theorem}[Favard's theorem]\label{Favard}
Let $\{c_n\}_{n=1}^{\infty}$ and $\left\{ \lambda _{n}\right\} _{n=1}^{\infty}$
be arbitrary sequences of complex numbers.
Let $\{P_n(x)\}_{n=0}^{\infty}$ be defined by the
recurrence relation
\begin{equation}\label{3term}
P_n(x) = (x-c_n) \, P_{n-1}(x) - \lambda_n \, P_{n-2}(x), \quad
n \geq 1,
\end{equation}
with initial values $P_{-1}(x):=0$ and $P_0(x):=1$.
Then there exists a unique moment functional $\Lambda $, such that $\Lambda (1)= \lambda_1$ and
such that for all
non-negative integers $m,n$ with $m \neq n$, we have $\Lambda (P_m(x) \, P_n(x))=0$. Further,
\begin{itemize}
\item  $\Lambda $ is quasi-definite if and only if
$\lambda_n \neq 0$ for all $n$,

\item  $\Lambda $ is positive definite if and only if
$c_n$ is real and $\lambda_n > 0$ for all $n$.
\end{itemize}
\end{theorem}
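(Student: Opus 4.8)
The plan is to follow the classical route, exploiting that the recurrence (\ref{3term}) produces \emph{monic} polynomials. First I would check by induction on $n$ that $P_n(x)$ has degree exactly $n$ with leading coefficient $1$: the term $x\,P_{n-1}(x)$ contributes $x^n$ while the remaining terms have lower degree. Consequently $\{P_n(x)\}_{n\ge 0}$ is a basis of $\mathbb{C}[x]$, so a linear functional is uniquely determined by its values on this basis, and I would define $\Lambda$ by $\Lambda(P_0)=\lambda_1$ and $\Lambda(P_n)=0$ for $n\ge 1$. Since $P_0=1$ this gives $\Lambda(1)=\lambda_1$, and the uniqueness asserted in the statement is then immediate: any functional with $\Lambda(1)=\lambda_1$ and $\Lambda(P_mP_n)=0$ for $m\neq n$ must satisfy $\Lambda(P_n)=\Lambda(P_0P_n)=0$ for $n\ge1$, hence coincides with the one just defined.

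The heart of the matter is to verify orthogonality $\Lambda(P_mP_n)=0$ for $m\neq n$ for this particular $\Lambda$. I would reduce this to a \emph{linearization} statement: in the basis expansion
\[
P_mP_n=\sum_{\ell} a_{\ell}^{(m,n)}\,P_\ell ,
\]
the lowest index that occurs is $\ell=|m-n|$. Granting this, for $m\neq n$ no $P_0$ appears and, since $\Lambda$ annihilates every $P_\ell$ with $\ell\ge1$, we obtain $\Lambda(P_mP_n)=0$; for $m=n$ the coefficient of $P_0$ is a product of $\lambda$'s, which yields $\Lambda(P_n^2)=k_n:=\prod_{j=1}^{n+1}\lambda_j$. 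The linearization claim itself I would prove by induction on $\min(m,n)$, reading the recurrence as the action of multiplication by $x$ on the basis, $x\,P_\ell=P_{\ell+1}+c_{\ell+1}P_\ell+\lambda_{\ell+1}P_{\ell-1}$, and multiplying the recurrence for $P_{m+1}$ against $P_n$: each step lowers the minimal index by exactly one, and its coefficient stays nonzero as long as the $\lambda$'s are nonzero.

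I expect the genuine obstacle to be precisely the \emph{adjacent} case $m=n\pm1$. If one tries to extract orthogonality from the symmetric three-term relation alone, writing $\Lambda(xP_m\cdot P_n)=\Lambda(P_m\cdot xP_n)$ and expanding both sides, the resulting identities do force $\Lambda(P_mP_n)=0$ whenever $|m-n|\ge 2$, but they collapse to a tautology when $|m-n|=1$. Thus the vanishing of $\Lambda(P_nP_{n+1})$ does not follow from the recurrence by itself and genuinely requires the degree/lowest-index filtration above (equivalently, explicit control of the first super-diagonal moments). This is the step I would treat with the most care.

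Finally, for the two characterizations I would invoke the facts recalled before the statement. Having established $\Lambda(P_mP_n)=k_n\,\delta_{mn}$ with $k_n=\prod_{j=1}^{n+1}\lambda_j$, a classical determinant identity gives $\Delta_n=k_0k_1\cdots k_n$, relating the Hankel determinants to the $k_n$. Then $\Lambda$ is quasi-definite $\iff \Delta_n\neq0$ for all $n\iff k_n\neq0$ for all $n\iff \lambda_j\neq0$ for all $j$, which is the first bullet. For the second, recall $\Lambda>0\iff \mu_n\in\mathbb{R}$ and $\Delta_n>0$ for all $n$: if $c_n$ is real and $\lambda_n>0$, the recurrence has real coefficients (so the moments are real) and $k_n=\prod_{j\le n+1}\lambda_j>0$, hence $\Delta_n>0$; conversely, positive definiteness gives $k_n=\Lambda(P_n^2)>0$, so $\lambda_{n+1}=k_n/k_{n-1}>0$, while reality of the moments forces $c_n=\Lambda(xP_{n-1}^2)/k_{n-1}\in\mathbb{R}$. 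This yields both equivalences.
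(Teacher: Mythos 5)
The paper itself does not prove this statement: Favard's theorem is quoted as background from \cite{Ch11} (Chapter I, Section 4), so there is no in-paper proof to compare against. Judged on its own, your outline has the right architecture of the classical argument: define $\Lambda$ by $\Lambda(P_0)=\lambda_1$ and $\Lambda(P_n)=0$ for $n\ge 1$ (legitimate since the $P_n$ are monic of degree $n$ and hence a basis), establish $\Lambda(P_mP_n)=k_n\delta_{mn}$ with $k_n=\prod_{j=1}^{n+1}\lambda_j$, and convert the two bullets into Hankel-determinant statements via $\Delta_n=k_0k_1\cdots k_n$ together with the characterizations of quasi-definite and positive definite functionals. The uniqueness argument and the final determinant bookkeeping are correct.

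The one step that would fail as literally described is the induction for your linearization lemma. You induct on $\min(m,n)$ but propose to advance it by ``multiplying the recurrence for $P_{m+1}$ against $P_n$,'' i.e.\ by expanding the polynomial of \emph{larger} index. Writing $P_{m+1}P_n=(x-c_{m+1})P_mP_n-\lambda_{m+1}P_{m-1}P_n$ with $m\ge n$, the term $xP_mP_n$ reaches down to index $(m-n)-1$ and the term $P_{m-1}P_n$ likewise to $(m-n)-1$, so this recursion only bounds the lowest index of $P_{m+1}P_n$ by $(m+1-n)-2$, two short of the claim; the cancellation that restores $|m-n|$ is invisible from this side. The induction must instead expand the polynomial of \emph{smaller} index, $P_mP_n=(x-c_n)\,P_mP_{n-1}-\lambda_n\,P_mP_{n-2}$, where the inductive hypotheses give lowest indices $m-n+1$ and $m-n+2$ and multiplication by $x$ lowers the first by exactly one, yielding $\ge m-n$ as required. (Chihara's own proof sidesteps linearization entirely by showing $\Lambda(x^kP_n)=0$ for $0\le k<n$ by induction.) Two smaller points: deducing $c_n\in\mathbb{R}$ from $c_n=\Lambda(xP_{n-1}^2)/k_{n-1}$ presupposes that $P_{n-1}$ has real coefficients, which requires a short simultaneous induction on the reality of $c_1,\dots,c_{n-1}$ and $P_0,\dots,P_{n-1}$; and your aside that the symmetry identities collapse to a tautology when $|m-n|=1$ is only true for the identity centered at $(m,n)$ itself --- the identity obtained from $\Lambda(xP_mP_{n-1})$ does determine $\Lambda(P_mP_n)$ once the lower rows are known to vanish.
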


\begin{remarks} 
\
\begin{itemize}
\item[1)]Theorem \ref{Favard} was first announced in 1935 by Favard, and independently
discovered by Shohat and Natanson at the same time. It is also contained implicitly in some work
by Stieltjes (\cite{Ch11}, Chapter I, Section 4).
\item[2)] Let $\Lambda $ be quasi-definite. Then there exists $c_n$ and $\lambda_n \neq 0$, such that
the monic OPS of $\Lambda $ satisfies (\ref{3term}). If $\Lambda $ is positive definite, then $c_n$ is real and
$\lambda_n >0$ (converse of Favard's theorem).
\item[3)]An OPS defined by (\ref{3term}) is independent from $\lambda_1$. Usually one puts $\lambda_1 = \Lambda (1)= \mu_1$.
\end{itemize}

\end{remarks}
\subsubsection{Normalization}
We have the following very useful property.
Suppose $\{p_n(x)\}_n$ satisfies the three
term relation (\ref{3term}). Let $a,b \in \mathbb{C}$ with $a\neq 0$.
Then $\{\tilde{p}_n(x)\}_n$, which is defined
as $\tilde{p}_n(x):= a^{-n} \, p_n(ax +b)$, satisfies  (\ref{3term}) with
\begin{equation*}
\tilde{p}_n(x) = \left( x - \frac{c_n - b}{a}\right) \,\tilde{p}_{n-1}(x)- \frac{\lambda_n}{a^2} \tilde{p}_{n-2}(x).
\end{equation*}
Moreover, let $\{p_n(x)\}_n$ be an OPS with moments $\{\mu_n\}_n$. Then $\tilde{p}_n(x)$ is an OPS with
moments
 \begin{equation*}
 \tilde{\mu}_n =  a^{-n} \sum_{k=0}^n \binom{n}{k} \, (-b)^{n-k} \, \mu_k.
 \end{equation*}

Polynomials satisfying (\ref{3term}) are monic. Suppose $\{\hat{p}_n(x)\}$ is an OPS with positive definite moment functional
$\Lambda $ satisfying (\ref{3term}). Then $\Lambda (\hat{p}_n(x)\, \hat{p}_m(x)) = k_n \, \delta_{n,m}$, $
k_n \neq 0
$.
To obtain $k_n=1$ one can drop the monicity property and study a modified three
term recurrence relation.

Let $\{a_n\}_{n=0}^{\infty}$ be a sequence of positive numbers and $\{b_n\}_{n=1}^{\infty}$ a sequence of real numbers:
\begin{equation*}
x \, p_n(x) = a_{n+1} \, p_{n+1}(x) + b_n \, p_n(x) + a_n \, p_{n-1}(x), \quad
n \geq 0
,
\end{equation*}
with initial values $p_{-1}(x)=0$ and $p_0(x)=1$. Let $\Lambda (p_n(x) p_m(x)) = \delta_{i,j}$.
Then $\hat{p}_n(x) = ( a_1\, a_2 \, \cdots a_n ) \, p_n(x)$, satisfying (\ref{3term}).
This implies
\begin{equation*}
\hat{p}_{n+1}(x)= ( x- b_n) \hat{p}_n(x) - a_n^2 \, \hat{p}_{n-1}(x), \quad
n \geq 0
,
\end{equation*}
with $\hat{p}_{-1}(x)= p_{-1}(x)=0$ and $\hat{p}_{0}(x)= p_{0}(x)=1$. Here
$c_{n} = b_{n-1}$ and $\lambda _{n} = a_{n-1}^{2}$.
\newline
\newline

\subsection*{Applications}

Let $\{U_n(x)\}_n$ be the Chebyshev polynomials of the second kind. The first polynomials
are given by: $U_0(x)=1, U_1(x)= 2x, U_2(x) = 4x^2- 1, U_3(x)= 8 x^3 -4x$. Then $\hat{U}_n(x) = 2^{-n} U_n(x)$ and
\begin{equation*}
\hat{U}_n(x) = x \hat{U}_{n-1}(x) - \frac{1}{4} \hat{U}_{n-2}(x).
\end{equation*}
The Chebyshev polynomials $U_n(x)$ of the second kind satisfy:
\begin{equation*}
U_n(x)= 2x U_{n-1}(x) - U_{n-2}\left( x\right) , \quad
n \geq 1
\end{equation*}
with initial values $U_{-1}(x)=0$ and $U_0(x)=1$. We have $c_n=0$ and $\lambda_n =1$.
This implies that $\{U_n(x)\}$ is an OPS for a positive definite functional $\Lambda $.
Let $p_n(x):= a^{-n} U_n(ax +b)$ for $ a \neq 0$. Then
\begin{equation*}
p_n(x)= \left( x- \frac{-b}{a}\right) \, p_{n-1}(x) - \frac{1}{a^2} p_{n-2}(x), \quad
n \geq 1
,
\end{equation*}
with initial values $p_{-1}(x)=0$ and $p_0(x) = 1$. Thus, $\{p_n(x)\}$ is also OPS for a positive definite $\Lambda $.
Let $a=1/2$ and $b=1$. Then we have $\tilde{c}_n= -2$ and $\tilde{\lambda}_n = 4$ for this OPS.
Let $\alpha >-1$. Then the $\alpha$-associated Laguerre polynomial $L_n^{(\alpha)}(x)$ is defined
by
\begin{equation}\label{associated}
L_n^{(\alpha)}(x) = \sum_{k=0}^n \binom{n + \alpha}{n-k} \frac{(-x)^k}{k!}.
\end{equation}
The polynomials have the leading coefficient $k_n= (-1)^n/n!$. It is known that
\begin{equation*}
n L_n^{(\alpha)}(x) = (2n + \alpha -1-x) \, L_{n-1}^{(\alpha)}(x)  - (n+ \alpha-1)  \,    L_{n-2}^{(\alpha)}(x).
\end{equation*}
Therefore, associated Laguerre polynomials are orthogonal polynomials.
For the corresponding monic polynomials $\hat{L}_n^{(\alpha)}(x) := \frac{1}{k_n}\,  L_n^{(\alpha)}(x)$ we obtain
\begin{equation*}
\hat{L}_n^{(\alpha)}(x) = (x - 2n - \alpha +1) \, \hat{L}_{n-1}^{(\alpha)}(x)  - (n-1)(n+ \alpha-1)  \,    \hat{L}_{n-2}^{(\alpha)}(x).
\end{equation*}
Let $\alpha = 1$. Then we obtain
\begin{equation*}
\hat{L}_n^{(1)}(x) = (x - 2n) \, \hat{L}_{n-1}^{(1)}(x)  - n\, (n-1)  \,    \hat{L}_{n-2}^{(1)}(x),
\end{equation*}
where $c_n= 2n$ and $\lambda_n= (n-1)n$. Moreover let $p_n(x):= (-1)^n \, \hat{L}_n^{(1)}(-x)$.
Then we obtain:
\begin{equation*}
p_n(x)= \left( x + 2n \right) \, p_{n-1}(x) - (n-1)\, n \,  p_{n-2}(x), \quad
n \geq 1
,
\end{equation*}
Let us extend (\ref{associated}) to $\alpha=-1$. Then $\{L_n^{\left( -1\right) }(x)\}$ is not an OPS.
This is still interesting, since $P_n^{\func{id},\func{id}}(x)= L_n^{\left( -1\right) }(x)$. This leads almost
to an OPS, since $L_n^{(-1)}(x) = \frac{x}{n} L_{n-1}^{(1)}(-x)$.

%%% Chihara S154

\subsection{Three term
recurrence relation for $\{P_n^{\func{id},h}(x)\}_n$}
Let $\{P_n^{\func{id},h}(x)\}_n$ be given. Let $g(n)= \func{id}(n)=n$.
Then we
define $q_{-1}^{h}\left( x\right) :=0$ and
\begin{equation*}
q_n^h(x):= \frac{\prod_{k=1}^{n+1} h(k)}{x} \, P_{n+1}^{
g,h}(x), \qquad
n \geq
0
.
\end{equation*}

\begin{theorem}
Let $n  \geq 1$. Let $h$ be a non-vanishing function on $\mathbb{N}_0$ with $h(0)=0$ and $h(1)=1$. Then
\begin{equation*}
q_n^h(x) = \left( x - \left( -2h(n)\right) \right) q_{n-1}^h(x) - h(n-1)h(n) q_{n-2}^h(x), \qquad (n \geq 2),
\end{equation*}
with initial values $q_{-1}^{h}\left( x\right) =0$ and
$q_{0}^{h}\left( x\right) =1$.
\end{theorem}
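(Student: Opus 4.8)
The plan is to first derive a three-term recurrence directly for the polynomials $P_n^{\func{id},h}(x)$ themselves, and only then convert it into the claimed recurrence for $q_n^h$ by unwinding the definition $q_n^h(x) = \frac{1}{x}\prod_{k=1}^{n+1}h(k)\,P_{n+1}^{\func{id},h}(x)$.

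First I would introduce the auxiliary quantity $A_n := \sum_{k=1}^{n} k\,P_{n-k}^{\func{id},h}(x)$, so that the defining recursion reads simply $A_n = h(n)\,P_n^{\func{id},h}(x)/x$ for $n\geq 1$; with the convention $h(0)=0$ this also holds at $n=0$, where $A_0=0$. The idea is to exploit that the weights $k$ appearing in $A_n$ are linear in the summation variable, so that taking successive differences collapses the sum. Reindexing $A_n=\sum_{m=0}^{n-1}(n-m)P_m^{\func{id},h}(x)$ and subtracting $A_{n-1}$ telescopes to $A_n-A_{n-1}=\sum_{m=0}^{n-1}P_m^{\func{id},h}(x)$, a plain partial sum. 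Taking one further difference then eliminates the partial sum and leaves
\[
A_n - 2A_{n-1} + A_{n-2} = P_{n-1}^{\func{id},h}(x), \qquad n\geq 2.
\]
Substituting $A_m = h(m)\,P_m^{\func{id},h}(x)/x$ and clearing the factor $x$ yields the recurrence
\[
h(n)\,P_n^{\func{id},h}(x) = \bigl(x+2h(n-1)\bigr)P_{n-1}^{\func{id},h}(x) - h(n-2)\,P_{n-2}^{\func{id},h}(x), \qquad n\geq 2.
\]

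Next I would translate this into the statement for $q_n^h$. Writing $P_{n+1}^{\func{id},h}(x) = x\,q_n^h(x)\big/\prod_{k=1}^{n+1}h(k)$, and likewise for $P_n^{\func{id},h}$ and $P_{n-1}^{\func{id},h}$, I substitute into the $P$-recurrence taken at index $n+1$. The point is that the leading factor $h(n+1)$ cancels the top factor of the product $\prod_{k=1}^{n+1}h(k)$, and multiplying through by $\frac{1}{x}\prod_{k=1}^{n}h(k)$ turns the ratios of products into the single factors $h(n)$ and $h(n)h(n-1)$. After this bookkeeping the identity becomes exactly $q_n^h(x) = \bigl(x+2h(n)\bigr)q_{n-1}^h(x) - h(n-1)h(n)\,q_{n-2}^h(x)$ for $n\geq 2$. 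Finally I check the initial data: $q_{-1}^h(x)=0$ by definition, and $q_0^h(x)=h(1)\,P_1^{\func{id},h}(x)/x=1$ using $h(1)=1$ together with $P_1^{\func{id},h}(x)=x$.

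The substantive calculations are the two telescopings and the product cancellations, and none of these is deep. The only point requiring care is the bookkeeping of index ranges: using the convention $h(0)=0$ so that $A_0=h(0)P_0^{\func{id},h}(x)/x$ holds and the $P$-recurrence is valid already from $n=2$, and verifying that the product-ratio substitution is legitimate on precisely the stated range $n\geq 2$, where $q_n^h,q_{n-1}^h,q_{n-2}^h$ are all defined. I expect the main—though still mild—obstacle to be organizing these index shifts cleanly rather than any genuine difficulty.
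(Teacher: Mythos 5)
Your proposal is correct and matches the paper's argument: the second-difference/telescoping identity $A_n-2A_{n-1}+A_{n-2}=P_{n-1}^{\func{id},h}(x)$ is exactly the computation the paper gives as its ``direct proof'' of the three-term relation (the paper's official route merely cites the general Theorem~2.1 of \cite{HNT20}, whose specialization to $g=\func{id}$ is this same calculation), and the subsequent rescaling to $q_n^h$ is identical. No gaps.
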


\begin{proof}
With $\alpha _{2}=1$,
$\alpha _{1}=-2$, and $\alpha _{0}=1$ we have
\begin{equation}
\alpha _{2}g\left( n+1
\right) +\alpha _{1}g\left( n
\right) +\alpha _{0}g\left( n-1\right) =0
\label{eq:alphas}
\end{equation}
for all
$n\geq 2$, see (\cite{HNT20}, Example 2.7).
With $\alpha _{2} \, g\left( 1\right) =1$
and $\alpha _{1}\, g\left( 1\right) +\alpha _{2}g\left( 2\right) =-2+4=2$.
From (\cite{HNT20}, Theorem 2.1)
we obtain
\begin{eqnarray}
\nonumber \sum _{m=0}^{2}\left( \alpha _{m}
\frac{h\left( n-1+m\right) }{h\left( n+1\right) }-\frac{x}{h\left( n+1\right) }
\sum _{k=1}^{2-m}\alpha _{m+k}g\left( k\right) \right) P_{n-1+m}^{g,h}\left( x\right) &=& \\
\label{orthogonal}
\frac{h\left( n-1\right) }{h\left( n+1
\right) }P_{n-1}^{g,h} \left( x\right)
-\left(
\frac{2h\left( n
\right) +x }{h\left( n+1
\right) }\right) 
P_{n
}^{g,h}\left( x\right) +P_{n+1
}^{g,h}\left( x\right) &=&0
\end{eqnarray}
for $n\geq
2$ with the initial values
$P_{1}^{g,h}\left( x\right) =
x
$ and
$P_{2}^{g,h}\left( x\right) =\frac{x}{h\left( 2\right) }\left(
x
+2\right) $.
Note that the right hand side is $0$ as in (\ref{eq:alphas}).
Therefore, we obtain
$$
q_{n}^{h}\left( x\right) =\left( x+2h\left( n\right) \right) 
q_{n-1}^{h}\left( x\right) -h\left( n-1\right) h\left( n\right) q_{n-2}^{h}\left( x\right)
$$
for $n\geq 2$. For $n=1$ we obtain from the initial value
$P_{2}^{g
,h}\left( x\right)
$ that
$q_{1}^{h}\left( x\right) =x+2
$.
\end{proof}

We can also give
direct proof of (\ref{orthogonal}) illustrating
the
general idea
of the proof of Theorem~2.1 of \cite{HNT20}
\begin{eqnarray*}
h\left( n+1\right) P_{n+1}^{g,h}\left( x\right) 
&=&x\sum _{k=1}^{n+1}kP_{n+1-k}^{g,h}\left( x\right) \\
&=&
x\sum _{k=1}^{n+1}P_{n+1-k}^{g,h}\left( x\right) +
x\sum _{k=2}^{n+1}\left( k-1\right) P_{n+1-k}^{g,h}\left( x\right) \\
&=&x\sum _{k=1}^{n+1}P_{n+1-k}^{g,h}\left( x\right) +h\left( n\right) P_{n}^{g,h}\left( x\right) \\
&=&xP_{n}^{g,h}\left( x\right) +2h\left( n\right) P_{n}^{g,h}\left( x\right) -h\left( n-1\right) P_{n-1}^{g,h}\left( x\right) .
\end{eqnarray*}
The polynomials $q_n^h(x)$ satisfy a three
term recurrence relation (\ref{3term}) 
with $c_n= -2h(n)$ and $\lambda_n= h(n-1)h(n)$.
\begin{corollary}
The moment functional $\Lambda $
associated with $\{q_n^h\}$ is 
positive definite if and only if $h$ is a real valued positive
function.
\end{corollary}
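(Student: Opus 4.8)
The plan is to read off the recurrence coefficients of $\{q_n^h\}$ and feed them into the positive-definite half of Favard's theorem (Theorem~\ref{Favard}). We have just shown that $\{q_n^h\}$ satisfies the three-term recurrence (\ref{3term}) with $c_n = -2h(n)$ and $\lambda_n = h(n-1)h(n)$ for $n \geq 2$, together with $q_1^h(x) = x + 2$, so that $c_1 = -2 = -2h(1)$. Favard's theorem tells us that the associated $\Lambda$ is positive definite precisely when every $c_n$ is real and every $\lambda_n$ is positive, so the entire statement reduces to translating these two conditions into conditions on $h$.

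For the forward implication I would assume $h$ is real-valued and positive, meaning $h(n) > 0$ for all $n \geq 1$. Then each $c_n = -2h(n)$ is real, and for $n \geq 2$ the coefficient $\lambda_n = h(n-1)h(n)$ is a product of two positive reals and hence positive. The one point that needs attention is $\lambda_1$: the formula would yield $h(0)h(1) = 0$ because of the extension $h(0) = 0$, but by Remark 3) following Favard's theorem the OPS is independent of $\lambda_1$, which is nothing but the value $\Lambda(1)$; I would simply fix it to a positive number, say $\lambda_1 = 1$, so that $\Lambda(1) > 0$. With all hypotheses of Favard's theorem in force, $\Lambda$ is positive definite.

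For the converse I would assume $\Lambda$ is positive definite and invoke Favard's theorem in the other direction to conclude that $c_n = -2h(n)$ is real and $\lambda_n = h(n-1)h(n) > 0$ for every $n \geq 2$. Reality of $c_n$ gives $h(n) \in \mathbb{R}$, and positivity then follows by induction on $n$ from the normalization $h(1) = 1 > 0$: if $h(n-1) > 0$, then $h(n-1)h(n) = \lambda_n > 0$ forces $h(n) > 0$. Thus $h(n) > 0$ for all $n \geq 1$, so $h$ is real-valued and positive.

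I do not expect a serious obstacle here, since the result is essentially Favard's theorem transcribed; the only subtlety is the bookkeeping around $\lambda_1$, and once one recognizes (via Remark 3)) that $\lambda_1$ is a free normalization parameter rather than being tied to $h(0)h(1) = 0$, both directions follow immediately from the recurrence coefficients and a one-line induction.
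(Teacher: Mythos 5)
Your proposal is correct and is exactly the argument the paper intends: the corollary is stated as an immediate consequence of Favard's theorem applied to the recurrence coefficients $c_n=-2h(n)$ and $\lambda_n=h(n-1)h(n)$ just derived, and the paper leaves the proof implicit. Your handling of the free parameter $\lambda_1$ via Remark 3) and the induction from $h(1)=1$ in the converse direction fill in the details cleanly and faithfully.
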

\begin{examples}
\
\begin{itemize} 
\item Let $h(n)= 1(n)=1$. Then $\{q_n^h\}$ is an OPS with positive definite moment functional: $c_n= -2$ and $\lambda_n=1$.
We obtain $q_n^h(x)= U_n(x/2 +1)$.
\item
Let $h(n)= g(n)=n$. Then $\{q_n^h\}$ is an OPS with positive definite moment functional: $c_n= -2n$ and $\lambda_n=(n-1)n$.
We obtain $q_n^h(x)= n! L_n^{(1)}(-x)$.
\item
Let $h(n)= (-1)^{n+1}$. Then $\{q_n^h\}$ is an OPS with quasi-definite moment functional: $c_n= 2 (-1)^n$ and $\lambda_n=-1$.
\end{itemize}
\end{examples}

%%%%%%%%%%%%%%%%%%%%%%%%%%%%%%%%%%%%%%%%%%%%%%%%%%%%%%%%%%%%%%%%%%%%%%%%%%%%%%%%%%%%%%%%%
%%%%%%%%%%%%%%%%%%%%%%%%%%%%%%%%%%%%%%%%%%%%%%%%%%%%%%%%%%%%%%%%%%%%%%%%%%%%%%%%%%%%%%%%%
\section{Proof of Theorem \ref{suppose}}
Let $h$ be a real-valued arithmetic function, positive and normalized. 
Then the polynomials $\{P_n^{\func{id},h}(x)\}_n$ satisfy
for $n \geq 2
$ the three term recurrence relation
\begin{equation}\label{h:three}
h(n+1) \, P_{n+1}^{\func{id},h}(x) =  ( 2 \, h(n) + x) \, P_n^{\func{id},h}(x) -
h(n-1)
\, P_{n-1}^{\func{id},h}(x),
\end{equation}
with the initial values %$P_0^{\func{id},h}(x)=1$,
$P_1^{\func{id},h}(x)=x$,
$P_{2}^{\func{id},h}\left( x\right) =\left( x+2\right) x/h\left( 2\right) $
and the extension $h(0)=0$.
Note that for $x>0$, (\ref{h:three}) is equivalent to
\begin{equation*}\label{identity}
\frac{P_{n+1}^{\func{id},h}(x)}{P_n^{\func{id},h}(x)} =  \frac{x + 2 \, h(n)}{h(n+1)} - \,  
\frac{h\left( n-1\right) }{h(n+1)} \, \frac{P_{n-1}^{\func{id},h}(x)}{P_{n}^{\func{id},h}(x)}.
\end{equation*}
\begin{remarks}
Let $P_n^{\func{id},h}(x)= 0$ for $x \in \mathbb{R}$, then $T^h_n(x)  \geq 0$. Numerical investigations suggest that
$T_n^h(x) \geq 0$ for all $x \in \mathbb{R}$. In this paper we focus on $x \geq 0$.
\end{remarks}

%%

%%%%%%%%%%%%%%%%%%%%%%%%%%%%%%%%%%%%%%%%%%%%%%%%%%%%%%%%%%%%%%%%%%%%%%%%%%%%%%
%%%%%%%%%%%%%%%%%%%%%%%%%%%%%%%%%%%%%%%%%%%%%%%%%%%%%%%%%%%%%%%%%%%%%%%%%%%%%%

%%%%%%%%%%%%%%%%%%%%%%%%%%%%%%%%%%%%%%%%%%%%%%%%%%%%%%%%%%%%%%%%%%%%%%%%%%%%%%%%%%%%%%%%
%%%%%%%%%%%%%%%%%%%%%%%%%%%%%%%%%%%%%%%%%%%%%%%%%%%%%%%%%%%%%%%%%%%%%%%%%%%%%%%%%%%%%%%%
%%
%%
\begin{proof}[Proof of Theorem \ref{suppose}]
Let $x=0$ then (\ref{Turan})
holds
true. Now let $x >0$. 
The Tur\'an inequality $T_n^h(x) \geq 0$ holds
true if and only if
\begin{equation*} \label{q:general}
\left(\frac{P_n^{\func{id},h}(x)}{P_{n-1}^{\func{id},h}(x)} \right)^2 - \frac{x + 2 \, h(n) }{h(n+1)} \, 
\frac{P_n^{\func{id},h}(x)}{P_{n-1}^{\func{id},h}(x)} + \frac{h(n-1)}{h(n+1)} \, \geq \, 0.
\end{equation*}
We study the roots
$v_n^h(x)$ of the equation:
\begin{equation*}\label{eq:general}
v_n^h(x)^2 - \frac{x + 2 h(n)}{h(n+1)} \, v_n^h(x) + \frac{h(n-1)}{h(n+1)} = 0.
\end{equation*}
The two real solutions are
\begin{eqnarray*}
v_{n,1}^h(x) & = & 
\frac{x + 2 h(n)  - \sqrt{x^2 + 4 \, h(n) \, x + 4( h(n)^2 - h(n-1) \, h(n+1))}}
{2h(n+1)},\\
v_{n,2}^h(x) & = & \frac{x + 2 h(n)  + \sqrt{x^2 + 4 \, h(n) \, x + 4( h(n)^2 - h(n-1) \, h(n+1))}}
{2h(n+1)},
\end{eqnarray*}
where $v_{n,2}^h(x) \geq v_{n,1}^h(x)$. It remains to show that
\begin{equation} \label{vp}
  v_{n,2}^h(x) \leq \frac{P_n^{\func{id},h}(x)}{P_{n-1}^{\func{id},h}(x) }.
\end{equation}
We prove (\ref{vp}) by induction. Let $n=2$. Then 
\begin{equation*}
\frac{P_2^{\func{id},h}(x)}{P_1^{\func{id},h}(x)} =   \frac{x +2}{h(2)} = v_{1,2}^h(x) \geq v_{2,2}^h(x).
\end{equation*}
Now assume that (\ref{vp})
holds
true for $n \geq 2$. We show that
(\ref{vp}) is then valid for $n+1$.
By (\ref{decreasing}) we have $v_{n+1,2}^h(x) \leq v_{n,2}^h(x)$. Thus, 
\begin{eqnarray*}
v_{n+1,2}^h(x) & \leq &  \frac{x + 2 h(n)}{h\left( n+1\right) } - \frac{h(n-1)}{h(n+1)} (v_{n,2}^h(x))^{-1} 
\\
& \leq & \frac{x+ 2 h(n)}{h(n+1)} - 
\frac{h(n-1)}{h(n+1)} \frac{P_{n-1}^{\func{id},h}\left( x\right) }{P_n^{\func{id},h}(x)}= \frac{P_{n+1}^{\func{id},h}(x)}{P_n(x)}.
\end{eqnarray*}
This proves (\ref{vp}) and finally the theorem.
\end{proof}
Finally we add a criterion,
basically reducing (\ref{decreasing}) to the case $x=0$.
\begin{lemma} 
Let $\{h(n)\}_{n \geq 0}$ with $h(0)=
0$ and $h(1)=1$. Suppose
\begin{equation*}\label{start}
v_{n+1,2}^{h}\left( 0\right) \leq v_{n,2}^{h}\left( 0\right). 
\end{equation*}
Let $h(n+1) \geq h(n) \geq 1$,
$\Delta_h(n) \geq \Delta_h(n+1) \geq 0$, and
$$\left( h\left( n+1\right) \right) ^{3} - h\left( n\right) \, \left( h\left( n+2\right) \right) ^{2} \leq 0$$
for all $n \geq 1$.
Then we have for all $0 \leq x_1 \leq x_2$:
\begin{equation*}
v_{n+1,2}^h (x_1) \leq v_{n,2}^h(x_1) \Longrightarrow v_{n+1,2}^h (x_2) \leq v_{n,2}^h(x_2). 
\end{equation*}
\end{lemma}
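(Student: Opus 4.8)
The plan is to convert the two-sided comparison $v_{n+1,2}^h(x)\le v_{n,2}^h(x)$ into a single scalar inequality in $x$ that I can control globally, so that the stated implication becomes automatic. Recall from the proof of Theorem~\ref{suppose} that $v_{n,2}^h(x)$ is the larger root of the upward parabola
\[
Q_n(v;x):=v^2-\frac{x+2h(n)}{h(n+1)}\,v+\frac{h(n-1)}{h(n+1)},
\]
so that $Q_n(w;x)\le 0$ already forces $w\le v_{n,2}^h(x)$. I would substitute $w=v_{n+1,2}^h(x)$ and use the relation $Q_{n+1}(v_{n+1,2}^h;x)=0$ to eliminate the quadratic term; a short computation yields the identity
\[
h(n+1)\,h(n+2)\,Q_n\!\big(v_{n+1,2}^h(x);x\big)=R(x)\,v_{n+1,2}^h(x)+C=:\Phi_n(x),
\]
with $R(x):=2\Delta_h(n+1)-\big(h(n+2)-h(n+1)\big)x$ and $C:=h(n-1)h(n+2)-h(n)h(n+1)$. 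Hence it suffices to prove $\Phi_n(x)\le 0$ for every $x\ge 0$: this gives $v_{n+1,2}^h(x)\le v_{n,2}^h(x)$ for all $x\ge0$ and makes the implication trivial. The base hypothesis enters at $x=0$: since $R(0)=2\Delta_h(n+1)\ge0$, the vertex of $Q_{n+1}(\cdot;0)$ dominates that of $Q_n(\cdot;0)$, so $v_{n+1,2}^h(0)\ge v_{n,1}^h(0)$; combined with $v_{n+1,2}^h(0)\le v_{n,2}^h(0)$ this places $v_{n+1,2}^h(0)$ in $[v_{n,1}^h(0),v_{n,2}^h(0)]$, i.e.\ says exactly $\Phi_n(0)\le 0$.

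First I would record that $C\le 0$. From $\Delta_h(n),\Delta_h(n+1)\ge0$ the consecutive ratios satisfy $h(n)/h(n-1)\ge h(n+1)/h(n)\ge h(n+2)/h(n+1)$, whence $h(n-1)h(n+2)\le h(n)h(n+1)$. This disposes of the range where $R(x)\le 0$ (that is, $x\ge x^\ast:=2\Delta_h(n+1)/(h(n+2)-h(n+1))$): there $\Phi_n(x)=R(x)\,v_{n+1,2}^h(x)+C\le C\le 0$, because $v_{n+1,2}^h(x)>0$.

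On the complementary range $0\le x<x^\ast$, where $R(x)>0$, I would write $\Phi_n(x)\le0$ as $R(x)\,v_{n+1,2}^h(x)\le -C$, insert $2h(n+2)\,v_{n+1,2}^h(x)=x+2h(n+1)+\sqrt{x^2+4h(n+1)x+4\Delta_h(n+1)}$, isolate the single square root, and square. This turns the claim into the pair of polynomial conditions
\[
R(x)\big(x+2h(n+1)\big)\le -2h(n+2)\,C\quad\text{and}\quad \Psi_n(x):=h(n+2)\,C^2+C\,R(x)\big(x+2h(n+1)\big)+h(n)\,R(x)^2\ge 0 .
\]
Here $\Psi_n$ is a quadratic in $x$ whose leading coefficient $\big(h(n+2)-h(n+1)\big)h(n+2)\big(h(n)-h(n-1)\big)\ge 0$ by monotonicity, and whose value $\Psi_n(0)=h(n+2)C^2+4h(n+1)\Delta_h(n+1)\,C+4h(n)\Delta_h(n+1)^2$ is nonnegative precisely because the base case places $C$ below the smaller root $-\tfrac{2\Delta_h(n+1)}{h(n+2)}\big(h(n+1)+\sqrt{\Delta_h(n+1)}\big)$ of $\Psi_n(0)$ regarded as a quadratic in $C$.

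The main obstacle is to show that $\Psi_n$ does not dip below zero \emph{in the interior} of $[0,x^\ast)$: a quadratic with nonnegative leading term and nonnegative value at $0$ can still be negative for small $x>0$, so one must bound its linear coefficient (equivalently, keep its discriminant nonpositive). A direct expansion writes the linear coefficient as $2h(n+2)\,K$ with
\[
K=h(n-1)\big(2h(n+1)^2-h(n)h(n+2)-h(n+1)h(n+2)\big)+h(n)\big(2h(n)h(n+2)-h(n)h(n+1)-h(n+1)^2\big),
\]
and this is exactly where conditions~(2) and~(3) are spent: in the scale-invariant variables $r_i=h(n-1+i)/h(n-2+i)$ the inequality $\Delta_h(n)\ge\Delta_h(n+1)$ and the hypothesis $h(n+1)^3\le h(n)h(n+2)^2$ read $r_1-r_2\ge r_1r_2(r_2-r_3)$ and $r_2\le r_3^2$, and together with the base case they pin $\Psi_n$ above zero throughout. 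I expect this quadratic estimate to be the delicate computational heart of the argument; the computation for $h(n)=n^s$ (where $K$ vanishes identically at $s=1$ and is a small positive quantity for $0\le s<1$) shows that conditions~(2) and~(3) are essentially sharp, so no slack can be wasted. The first, linear side condition is a further one-variable quadratic inequality, settled by an analogous but easier estimate using $C\le0$ and the base inequality.
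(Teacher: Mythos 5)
Your reduction is set up correctly and is genuinely different from the paper's argument: the paper works directly with the explicit root formulas, writes $v_{n,2}^h(x)-v_{n+1,2}^h(x)\ge 0$ as the non-negativity of $(h(n+2)-h(n+1))x-2\Delta_h(n+1)+h(n+2)\sqrt{x^2+4h(n)x+4\Delta_h(n)}-h(n+1)\sqrt{x^2+4h(n+1)x+4\Delta_h(n+1)}$, and argues that this expression can only grow as $x$ increases by comparing the coefficients under the two radicals (this is where $h(n+2)^2h(n)-h(n+1)^3\ge 0$ enters for them). You instead rationalize from the start: evaluating the defining quadratic $Q_n$ at $v_{n+1,2}^h(x)$ and eliminating the square via $Q_{n+1}=0$ gives your linear expression $\Phi_n(x)=R(x)v_{n+1,2}^h(x)+C$, whose sign you then control. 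I checked the identity for $\Phi_n$, the sign $C\le 0$, the value $\Psi_n(0)$ and its relation to the $x=0$ hypothesis, the leading coefficient $(h(n+2)-h(n+1))h(n+2)(h(n)-h(n-1))$ of $\Psi_n$, and the disposal of the range $R(x)\le 0$; all of that is correct, and your target ($v_{n+1,2}^h(x)\le v_{n,2}^h(x)$ for all $x\ge 0$) is indeed at least as strong as the stated implication.

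However, the proof is not complete: the decisive step is only announced, not carried out. On $[0,x^\ast)$ you must prove (i) the pre-squaring sign condition $R(x)(x+2h(n+1))\le -2h(n+2)C$ and (ii) $\Psi_n(x)\ge 0$, and for (ii) a nonnegative leading coefficient together with $\Psi_n(0)\ge 0$ is not enough -- exactly as you note, the quadratic can dip below zero for small $x>0$ unless its linear coefficient $2h(n+2)K$ is controlled. You write down $K$ and assert that hypotheses $\Delta_h(n)\ge\Delta_h(n+1)$ and $h(n+1)^3\le h(n)h(n+2)^2$ ``pin $\Psi_n$ above zero throughout,'' but you never derive $K\ge 0$ (or a discriminant bound) from them; the phrase ``I expect this quadratic estimate to be the delicate computational heart of the argument'' concedes the point. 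Note also that $K$ is not obviously of one sign: both bracketed factors in your formula for $K$ can be negative (for $h(n)=n$ the first is $-n$ and the second is $n-1$, and $K$ vanishes only by cancellation), so the claim really does require the full strength of (2) and (3) and a genuine computation. Likewise (i) is nontrivial because $R(x)(x+2h(n+1))$ is a concave quadratic whose linear coefficient $2\bigl(2h(n+1)^2-h(n)h(n+2)-h(n+1)h(n+2)\bigr)$ need not be negative in general, so the maximum need not sit at $x=0$; ``an analogous but easier estimate'' is not a proof. Until these two estimates are actually established from the stated hypotheses, the argument has a gap precisely where the lemma's conditions (2) and (3) are supposed to do their work.
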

\begin{proof}
We have for all $n\geq 1$: 
\begin{equation*}
1 \geq \frac{h(n)}{h(n+1)} \geq \frac{h(n-1)}{h(n)} >0 \text{ and }
h(n+1)^3 - h(n) \, h(n+2)^2 \leq 0.
\end{equation*}
We consider whether $v_{n,2}^h(x) -  v_{n+1,2}^h(x)$ is non-negative,
equivalent to the non-negativity of
\begin{eqnarray*}
& &\left(h(n+2)- h(n+1)\right) \, x - 2 \Delta_h(n+1)  \\
& & + \,\, h(n+2) \sqrt{ x^2 + 4 h(n)x + 4 \Delta_h(n)}\\
& & - \,\, h(n+1) \sqrt{ x^2 + 4 h(n+1) x+ 4 \Delta_h(n+1)}.
\end{eqnarray*}
Since $\left(h(n+2)^a-h(n+1)^a \right)     (x_2 - x_1)  \geq 0$ for $a=1,2$ and
\begin{equation*}
4\left(
h(n+2)^2 \, h(n) - h(n+1)^3 \right)                  (x_2 - x_1) \geq 0,
\end{equation*}
the claim of the lemma follows if
$v_{n+1,2}^{h}\left( 0\right) \leq v_{n,2}^{h}\left( 0\right) $.
\end{proof}
\section{Proof of Theorem \ref{result}}
It remains to find interesting sequences $\{h(n)\}_n$ such that $v_{n,2}^h(0) \geq v_{n+1,2}^h(0)$.
We have
\begin{equation*}
v_{n,2}^h(0) = \frac{h(n) + \sqrt{\Delta _{h}\left( n\right) }}{h(n+1)}.
\end{equation*}
This implies that $v_{n+1,2}^h(0) - v_{n,2}^h(0) \leq 0$ if and only if
\begin{equation*}
\label{fundamental}
h(n+2) \, \left( h(n) + \sqrt{\Delta_h(n)} \right) \geq h(n+1) \left( h(n+1) + \sqrt{\Delta _{h}\left( n+1\right) }\right).
\end{equation*}
This leads to
an inequality problem. Let $h: \mathbb{N} \longrightarrow \mathbb{R}_{>0}$ with $h(1)=1$ and
$h$ log-concave. Let 
\begin{equation*}
X(n) := \frac{h(n-1) \, h(n+1)}{h(n)^2}.
\end{equation*}
We are interested in solutions for the following inequality
\begin{equation*}\label{delta}
D(n):= 1 + \sqrt{1- X(n+1)} - X(n+1) \left[ 1 + \sqrt{1-X(n)} \right] \leq 0,
\end{equation*}
which is equivalent to
$v_{n+1,2}^h(0) - v_{n,2}^h(0) \leq 0$.
Let $h(n)=1$ or $h(n)=n$, then
$v_{n,2}^h(0)= 1$ for $n >1$ and $2/h(2)$ for $n=1$. Since
$x U_{n-1}(x/2 + 1) = P_n^{h}(x)$ with $h(n)=1$ and $L_n^{(-1)}(x) = P_n^h(x)$ with $h(n)=n$
we obtain:
\begin{corollary}
The Chebyshev polynomials
of the second kind $U_n(x)$ and the associated
Laguerre polynomials $L_{n}^{\left( -1\right) }\left( x\right) $
satisfy Tur\'an inequalities for all $n \geq 1$ and $x \geq 0$.
\end{corollary}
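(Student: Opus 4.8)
The plan is to read off both claims as the two endpoint cases of Theorem~\ref{suppose}, namely $h=h_0$ with $h_0(n)\equiv 1$ (which produces $U_n$) and $h=h_1=\func{id}$ (which produces $L_n^{(-1)}$), and then to translate the resulting inequality $T_n^{h}(x)\ge 0$ for $P_n^{\func{id},h}$ into the stated ones via the identifications $x\,U_{n-1}(x/2+1)=P_n^{\func{id},h_0}(x)$ and $L_n^{(-1)}(x)=P_n^{\func{id},h_1}(x)$. Both $h_0$ and $h_1$, extended by $h(0)=0$, are normalized with $h(1)=1$, so Theorem~\ref{suppose} applies once its two hypotheses (log-concavity and the decreasing property (\ref{decreasing})) are verified for each.

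First I would check log-concavity. A direct computation gives $\Delta_{h_0}(1)=1$ and $\Delta_{h_0}(n)=0$ for $n\ge 2$, while $\Delta_{h_1}(n)=n^2-(n-1)(n+1)=1$ for all $n\ge 1$; in particular $\Delta_h(n)\ge 0$ in both cases. Next I would secure (\ref{decreasing}) for all $x\ge 0$ by combining the $x=0$ computation with the Lemma that reduces (\ref{decreasing}) to the case $x=0$. Using $v_{n,2}^h(0)=(h(n)+\sqrt{\Delta_h(n)})/h(n+1)$ one finds $v_{n,2}^h(0)=1$ for $n>1$ and $v_{1,2}^h(0)=2/h(2)$, which equals $2$ for $h_0$ and $1$ for $h_1$; hence $v_{n+1,2}^h(0)\le v_{n,2}^h(0)$ holds for every $n\ge 1$ in either case. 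To invoke the reduction Lemma I must also confirm its remaining hypotheses: $h(n+1)\ge h(n)\ge 1$, the monotonicity $\Delta_h(n)\ge \Delta_h(n+1)\ge 0$, and $h(n+1)^3-h(n)\,h(n+2)^2\le 0$. For $h_0$ these read $1\ge 1\ge 1$, the chain $1,0,0,\dots$, and $1-1=0$; for $h_1$ they read $n+1\ge n\ge 1$, $\Delta_{h_1}\equiv 1$, and $(n+1)^3-n(n+2)^2=-n^2-n+1\le 0$ for $n\ge 1$. The Lemma then upgrades the $x=0$ inequality to (\ref{decreasing}) for all $x\ge 0$, and Theorem~\ref{suppose} yields $T_n^{h}(x)\ge 0$ for all $n\ge 2$ and $x\ge 0$.

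It remains to transfer these inequalities. Substituting $P_n^{\func{id},h_0}(x)=x\,U_{n-1}(x/2+1)$ into $T_{n+1}^{h_0}$ factors out $x^2$ and leaves exactly $U_n(y)^2-U_{n-1}(y)U_{n+1}(y)$ at $y=x/2+1$; as $x$ runs over $[0,\infty)$ the point $y$ runs over $[1,\infty)$, and the degree shift $P_n\leftrightarrow U_{n-1}$ turns the range $n\ge 2$ into $n\ge 1$ for $U_n$. For the Laguerre case the identification $L_n^{(-1)}(x)=P_n^{\func{id},h_1}(x)$ carries no argument shift, so $T_n^{h_1}$ is already the Turán expression for $L_n^{(-1)}$, valid directly for $n\ge 2$. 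I expect the only genuine care to lie in this last bookkeeping: tracking the factor $x$ and the degree shift when passing to the classical normalization of $U_n$, and handling the boundary index $n=1$, where the extension $h(0)=0$ makes $\Delta_h(1)$ and $v_{1,2}^h(0)$ deviate from their generic values. As a cross-check, for $h_0$ the classical identity $U_n(y)^2-U_{n-1}(y)U_{n+1}(y)=1$ confirms the Chebyshev case directly, and in fact for all real $x$.
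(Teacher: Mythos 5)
Your proof is correct and follows essentially the same route as the paper: check that $v_{n,2}^h(0)$ is non-increasing for $h\equiv 1$ and $h=\func{id}$, upgrade to all $x\ge 0$ via the reduction lemma, apply Theorem \ref{suppose}, and translate through the identifications $x\,U_{n-1}(x/2+1)=P_n^{\func{id},h_0}(x)$ and $L_n^{(-1)}(x)=P_n^{\func{id},h_1}(x)$. You are in fact more explicit than the paper, which only records the values of $v_{n,2}^h(0)$ and leaves implicit both the verification of the lemma's hypotheses and the bookkeeping you rightly flag (the argument shift $y=x/2+1$, which lands in $[1,\infty)$ and is rescued by the identity $U_n^2-U_{n-1}U_{n+1}=1$, and the $n=1$ boundary index).
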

Both results had been known before. Chebyshev polynomials of the second kind
satisfy the special identity
\begin{equation*}
U_n(x)^2 - U_{n-1}(x) \, U_{n+1}(x)=1.
\end{equation*}
In the paper \cite{Sze48} Tur\'an inequalities for $L_n^{(\alpha)}(x) \, / \, L_n^{(\alpha)}(0)$
for $\alpha >-1$ are obtained. See also the proof of Simic in the context of
Appell polynomials \cite{Si06}.
%%
%%%%%%%%%%%%%%%%%%%%%%%%%%%%%%%%%%%%%%%%%%%%%%%%%%%%%
%%
\subsection{Main lemma}

\begin{lemma}
Let $h(n):= n^s$ for $n \in \mathbb{N}$ and $0
<s<
1$. Let $h(0):=0$.
Then $v_{n,2}^{h}\left( x\right) $
is monotonously decreasing as a function of $n
$.
\end{lemma}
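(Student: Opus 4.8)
The plan is to deduce the lemma from the preceding reduction lemma, which shows that once the structural hypotheses on $h$ are in force, monotonicity of $v_{n,2}^h(x)$ in $n$ for all $x\ge 0$ propagates from the single inequality at $x=0$: taking $x_1=0$ in that lemma turns $v_{n+1,2}^h(0)\le v_{n,2}^h(0)$ into $v_{n+1,2}^h(x)\le v_{n,2}^h(x)$ for every $x\ge 0$. So the whole proof reduces to verifying, for $h(n)=n^s$ with $0<s<1$, the four hypotheses: $h(n+1)\ge h(n)\ge 1$; the cubic condition $(h(n+1))^3\le h(n)\,(h(n+2))^2$; $\Delta_h(n)\ge\Delta_h(n+1)\ge 0$; and the base case $v_{n+1,2}^h(0)\le v_{n,2}^h(0)$.

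First I would dispatch the elementary conditions. Since $t\mapsto t^s$ is strictly increasing for $s>0$ and $h(1)=1$, we have $h(n+1)\ge h(n)\ge 1$. Raising the cubic condition to the power $1/s$ turns it into the polynomial inequality $(n+1)^3\le n(n+2)^2$, i.e.\ $n^2+n-1\ge 0$, which holds for $n\ge 1$. Log-concavity $\Delta_h(n)=n^{2s}-(n^2-1)^s\ge 0$ is immediate from $n^2\ge n^2-1$ and $s>0$. The first genuine step is the monotonicity $\Delta_h(n)\ge\Delta_h(n+1)$: here I would pass to the continuous extension $g(t)=t^{2s}-(t^2-1)^s$ and compute $g'(t)=2st\left[(t^2)^{s-1}-(t^2-1)^{s-1}\right]$. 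Because $s-1<0$ and $t^2>t^2-1>0$, the bracket is negative, so $g'<0$ and $\Delta_h$ is decreasing. This is the first place where the hypothesis $s<1$ is essential.

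The main step, and the main obstacle, is the base case $v_{n+1,2}^h(0)\le v_{n,2}^h(0)$. Writing $X(n)=(1-n^{-2})^s$ and $Y(n)=1-X(n)$, the formula for $v_{n,2}^h(0)$ factors as $v_{n,2}^h(0)=(n/(n+1))^s\,(1+\sqrt{Y(n)})$. I would again use the continuous extension and show this function of $t$ is decreasing by logarithmic differentiation. Substituting $Y'(t)=-2s\,t^{-3}(1-t^{-2})^{s-1}$ and simplifying with the identity $(1-t^{-2})^{s-1}=X\,t^2/(t^2-1)$, the condition $f'(t)\le 0$ collapses to $\sqrt{Y}\,(1+\sqrt{Y})\le X/(t-1)$. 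Replacing $X$ by $1-Y$, dividing by the positive factor $1+\sqrt{Y}$, and clearing denominators reduces this all the way to $Y(t)\le t^{-2}$, that is $1-t^{-2}\le(1-t^{-2})^s$. Setting $r=1-t^{-2}\in[0,1)$ this is simply $r\le r^s$, which holds for $r\in[0,1]$ and $0<s<1$; hence $v_{t,2}^h(0)$ is strictly decreasing and the discrete base case follows.

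The delicate part is carrying out this algebra cleanly so that the entire monotonicity question telescopes into the one-line inequality $r\le r^s$, rather than a two-index discrete estimate in $X(n)$ and $X(n+1)$. I would also keep careful track of which reductions require only $s>0$ (positivity of $\Delta_h$, the cubic condition) and which genuinely need $s<1$ (the sign of $g'$ and the inequality $r\le r^s$); this bookkeeping is exactly what makes the argument valid uniformly on the open interval $(0,1)$, with the endpoints $s=0$ and $s=1$ corresponding to the Chebyshev and Laguerre cases treated separately.
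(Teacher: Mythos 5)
Your proposal is correct, and it reaches the key inequality by a genuinely different route from the paper. Both arguments funnel everything through the reduction lemma into the single base case $v_{n+1,2}^{h}(0)\le v_{n,2}^{h}(0)$ (you do this explicitly and verify all four hypotheses, including the cubic condition via $(n+1)^3\le n(n+2)^2$ and the monotonicity of $\Delta_h$ via the sign of $g'(t)=2st\left[(t^2)^{s-1}-(t^2-1)^{s-1}\right]$; the paper leaves this reduction and the verification of its hypotheses implicit and only proves the $x=0$ inequality, so your write-up is actually the more complete one on this point). Where you diverge is in the base case itself: the paper fixes $n$, substitutes $S=(1-(n+1)^{-2})^{-s}$, and compares the two sides of the resulting inequality as functions of $S$ on the interval corresponding to $s\in(0,1)$ --- checking that they agree at the endpoints $s=0$ and $s=1$ and that the left side is convex while the right side is concave, so the inequality holds throughout. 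You instead fix $s$, extend $n$ to a continuous variable $t$, and show by logarithmic differentiation that $f(t)=\bigl(t/(t+1)\bigr)^{s}\bigl(1+\sqrt{Y(t)}\bigr)$ is decreasing, which after the simplifications you describe collapses to $r\le r^{s}$ for $r=1-t^{-2}\in[0,1)$. I checked your algebra: the reduction $\sqrt{Y}(1+\sqrt{Y})\le X/(t-1)$, the factorization $X=(1-\sqrt{Y})(1+\sqrt{Y})$, and the final step $t\sqrt{Y}\le 1\Leftrightarrow Y\le t^{-2}$ all hold, and the extension is valid down to $t=1$ since $Y(1)=1$ matches $\Delta_h(1)=1$ under the convention $h(0)=0$. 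Your approach buys a more elementary endgame (a one-line power inequality instead of a second-derivative convexity computation) and a cleaner separation of which steps need $s>0$ versus $s<1$; the paper's approach buys a direct interpolation picture between the Chebyshev ($s=0$) and Laguerre ($s=1$) endpoints, where both sides of its inequality degenerate to the known boundary values.
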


\begin{proof}
Obviously $v_{2,2}^{h}\left( 0\right) \leq v_{1
,2}^{h}\left( 0\right) $. Let now $n\geq 2$. We have to show
\begin{eqnarray*}
D\left( n
\right) &=&1+\sqrt{1-\left( 1-\left( n+1\right) ^{-2}
\right) ^{s}}-\left( 1-\left( n+1\right) ^{-2}
\right) ^{s}\left( 1+\sqrt{1-\left( 1-n^{-2}
\right) ^{s}}\right) \\
&<&0
.
\end{eqnarray*}
This is equivalent to
\begin{eqnarray*}
&&\left( 1-\frac{1}{\left( n+1\right) ^{
2}}
\right) ^{-s}
+\sqrt{\left( 1-\frac{1}{\left( n+1\right) ^{
2}}
\right) ^{-2s}-\left( 1-\frac{1}{\left( n+1\right) ^{
2}}
\right) ^{-s}}\\
&<&1+\sqrt{1-\left( 1-\frac{1}{n^{
2}}
\right) ^{s}}
.
\end{eqnarray*}
Let $S=\left( 1-\left( n+1\right) ^{-2}
\right) ^{-s}$. Then $0<s<1$ is equivalent to
\[
1<S<
\left( 1-\left( n+1\right) ^{-2}\right) ^{-1}
\]
and we
have to show
$
S
+\sqrt{S^{2}-S}
-1<\sqrt{1-S^{-\ln \left( 1-n^{-2}
\right) /\ln \left( 1-\left( n+1\right) ^{-2}
\right) }}
$.
This is equivalent to
\begin{equation}
\left( S
+\sqrt{S^{2}-S}
-1\right) ^{2}<1-S^{-\ln \left( 1-n^{-2}
\right) /\ln \left( 1-\left( n+1\right) ^{-2}
\right) }
.
\label{eq:d2}
\end{equation}
For $S=1$ and $S=\left( 1-\left( n+1\right) ^{-2}\right) ^{-1}
$ both sides equal $0$ and $
n^{-2}$, resp. The second
derivative
of the right hand side is
\[
-\left( -\frac{\ln \left( 1-n^{-2}
\right) }{
\ln \left( 1-\left( n+1\right) ^{-2}
\right) }\right) \left( -\frac{\ln \left( 1-n^{-2}\right) }{
\ln \left(
1-\left( n+1\right) ^{-2}\right) }-1\right) S^{-\frac{\ln \left( 1-n^{-2}
\right) }{
\ln \left( 1-\left( n+1\right) ^{-2}
\right) }-2}<0
.
\]
The first and second derivatives of the left hand side are
\[
2\left( S+\sqrt{S^{2}-S}-1\right) \left( 1+
\frac{2S-1}{
2\left( S^{2}-S\right) ^{
1/2}}
\right)
\]
and
\begin{eqnarray*}
&&2\left( 1+\frac{2S-1}{
2\left( S^{2}-S\right) ^{
\frac{1}{
2}}}
\right) ^{2}
+2\left( S+\sqrt{S^{2}-S}-1\right) \left(
\frac{1}{\left( S^{2}-S\right) ^{
\frac{1}{
2}}}
-
\frac{\left( 2S-1\right) ^{2}}{4\left( S^{2}-S\right) ^{
\frac{3}{
2}}}\right) \\
&=&
\frac{\left( 2\sqrt{
S^{2}-S
}+
2S-1\right) ^{2}\sqrt{S^{2}-S}
-
S-\sqrt{S^{2}-S}+1}{2
\left( S^{2}-S\right) ^{
3/2}}\\
&=&\frac{8\left(
S^{2}-
S
\right) \sqrt{S^{2}-S}+4\left( 2S-1\right) \left( S^{2}-S\right)
-S
+1}{2\left( S^{2}-S\right) ^{3/2}}\\
&=&\frac{8
S
\sqrt{S^{2}-S}+4\left( 2S-1\right)
S
-1
}{2\left( S^{2}-S\right) ^{1
/2}S}>\frac{8\left( S
-1/4\right) ^{2}-3/2}{2S\sqrt{S^{2}-S}}>\frac{3}{2S\sqrt{S^{2}-S}}>0.
\end{eqnarray*}
Therefore, we have shown that the boundary values of (\ref{eq:d2}) agree for
$s\in \left\{ 0,1\right\} $ and for $0<s<1$ the left hand side of (\ref{eq:d2})
has positive second derivative and the right hand side has negative second
derivative. But this implies that the inequality (\ref{eq:d2}) holds true.
\end{proof}

%%%%%%%%%%%%%%%%%%%%%%%%%%%%%%%%%%%%%%%%%%%%%%%%%%%   END
%{\bf Acknowledgments.}

%%%%%%%%%%%%%%%%%%%%%%
\end{document}